\theoremstyle{plain}
\newtheorem{remark}{Remark}
\newtheorem{corollary}{Corollary}[section]
\newtheorem{theorem}{Theorem}[section]
\newtheorem{lemma}{Lemma}[section]
\theoremstyle{remark}
\begin{document}

\begin{frontmatter}
\title{FWER Goes to Zero for Correlated Normal}
\runtitle{FWER Goes to Zero for Correlated Normal}

\begin{aug}
\author[A]{\fnms{Monitirtha} \snm{Dey}\ead[label=e1]{monitirtha.d\_r@isical.ac.in}}
\and
\author[A]{\fnms{Subir} \snm{Kumar Bhandari}\ead[label=e2,mark]{subir@isical.ac.in}}

\address[A]{Indian Statistical Institute, 
\printead{e1}, \printead{e2}}

\end{aug}

\begin{abstract}
Familywise error rate (FWER) has been a cornerstone in simultaneous inference for decades, and the classical Bonferroni method has been one of the most prominent frequentist approaches for controlling FWER. The present article studies the limiting behavior of Bonferroni FWER in a multiple testing problem as the number of hypotheses grows to infinity. We establish that in the equicorrelated normal setup with positive equicorrelation, Bonferroni FWER tends to zero asymptotically. We extend this result for generalized familywise error rates and to arbitrarily correlated setups.
\end{abstract}
 
\begin{keyword}[class=MSC]
\kwd[Primary ]{62J15}
\kwd[; secondary ]{62F03}
\end{keyword}

\begin{keyword}
\kwd{Multiple testing under dependence}
\kwd{Familywise error rate}
\kwd{Bonferroni method}
\end{keyword}

\end{frontmatter}

\section{Introduction}

Large-scale multiple testing problems in various disciplines routinely involve dependent observations. For example, in genome-wide association studies, high-density SNP (single nucleotide polymorphism) markers used to analyze genetic diversity exhibit high correlation. In spatial data with close geographical locations, the test statistics corresponding to different hypotheses often get influenced by each other. Multistage clinical trials and functional magnetic resonance imaging studies also concern variables with complex and unknown dependence structures. However, most classical multiple testing procedures controlling false discovery rate (FDR) or familywise error rate (FWER) typically ignore this dependence. As a result, the problem of capturing the association among observations and extending the existing classical methods under association has attracted considerable attention in recent times. Multiple testing procedures under dependence have been discussed by Sun and Cai~\cite{r11}, Efron~\cite{r3}, Liu, Zhang and Page~\cite{r8} among others. Efron~\cite{r4} remarks that the correlation penalty on the summary statistics depends on the root mean square (RMS) of correlations. Efron~\cite{r5} contains an excellent review of the relevant literature.

This work focuses on the FWER, one of the most widely considered frequentist approaches to multiple testing. Defined as the probability of erroneously rejecting at least one true null hypothesis, it is the most natural generalization of type I error rate to simultaneous testing problems. Controlling FWER at a target level $\alpha$ requires conducting each of the individual hypothesis tests at lower levels. For instance, the Bonferroni method divides $\alpha$ by the number of tests considered.

We have considered the equicorrelated normal distribution with non-negative correlation $\rho$. Das and Bhandari~\cite{r1} have shown that under this setup, FWER($\rho$) is a convex function in correlation $\rho$ as the number of hypotheses grows to infinity. Consequently, they prove that the FWER of the Bonferroni procedure is bounded by $\alpha(1-\rho)$ where $\alpha$ is the target level. Dey~\cite{r2} has obtained upper bounds on FWER of the Bonferroni method in the equicorrelated non-asymptotic setup. Here we establish that the Bonferroni FWER($\rho$) tends to zero asymptotically for any positive $\rho$.

The approximation on FWER proposed in this work provides an estimate on the c.d.f. of the failure time of the parallel systems. Loperfido~\cite{r9} has found that the maximum of $n$ observations from equicorrelated normal distribution follows ($n-1$) dimensional skew-normal distribution. While the c.d.f. of a multivariate skew-normal distribution is very difficult to work with, an asymptotic approximation on the c.d.f. may be obtained along similar lines as in this article.

The rest of this paper is organized as follows. In Section 2, we introduce the necessary notation and set up the framework formally. Section 3 contains theoretical results about the limiting behavior of the FWER in equicorrelated and arbitrarily correlated normal setups. Section 4 presents an extension of our main contribution to generalized familywise error rates. Section 5 includes simulation findings that empirically demonstrate our results. We conclude with a brief discussion in Section 6.

\section{Preliminaries} We consider an \textit{Gaussian sequence model}:
$$X_{i} {\sim} \mathcal{N}(\mu_{i},1),  i=1, \ldots,n,$$
where $X_{i}$’s are independent and we are interested in the $n$ null hypotheses $H_{0i}:\mu_{i}=0$. The global null $ H_{0}=\bigcap_{i=1}^{n} H_{0 i}$ asserts that all means $\mu_i = 0$, while under the alternative, some mean is non-zero. We have assumed equicorrelated setup, i.e, $\operatorname{\mathbb{C}orr}\left(X_{i}, X_{j}\right)=\rho$ for $i \neq j$ where $\rho \in [0,1]$. Throughout this work, $\phi$ and $\Phi$ will be used to denote the probability density function and cumulative density function of standard normal distribution respectively.

We consider FWER of Bonferroni’s method which sets a same cut-off for each of the $n$ hypotheses. In the one-sided setting, it rejects $H_{0i}$ if $X_{i}>\Phi ^{-1}(1-\alpha/n) (=c_{\alpha,n}, \text{say})$ where $\alpha \in (0,1)$ is the target level at which one wishes to control FWER. In other words, the FWER of Bonferroni’s method under the equicorrelated setup  is given by
\begin{equation}
    FWER(n, \alpha,\rho) =\mathbb{P}\left(X_{i}>c_{\alpha,n}\right. \text{for some} \left.i \mid H_{0}\right)
=\mathbb{P}_{H_{0}}\bigg(\bigcup_{i=1}^{n}\{X_i > c_{\alpha,n}\}\bigg). \label{defFWER}
\end{equation}
Das and Bhandari~\cite{r1} consider the above equicorrelated framework and show the following:
\begin{theorem}\label{thm2.1}
Suppose each $H_{0i}$ is being tested at size $\alpha_{n}$. If $\displaystyle \lim _{n \rightarrow \infty} n \alpha_{n}=\alpha \in(0,1)$ then, $FWER$ asymptotically is a convex function in $\rho \in [0,1]$.
\end{theorem}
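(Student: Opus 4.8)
The plan is to exploit the one-factor representation of the equicorrelated Gaussian vector. Writing $X_i=\sqrt{\rho}\,Z+\sqrt{1-\rho}\,\epsilon_i$ with $Z,\epsilon_1,\dots,\epsilon_n$ i.i.d.\ standard normal reproduces the required covariance, and conditioning on the common factor $Z$ renders the rejection events independent. With the per-test cutoff $c_n=\Phi^{-1}(1-\alpha_n)$ this yields the closed form
\[
FWER(n,\alpha_n,\rho)=1-H_n(\rho),\qquad H_n(\rho)=\int_{-\infty}^{\infty}\Phi\!\Big(\tfrac{c_n-\sqrt{\rho}\,z}{\sqrt{1-\rho}}\Big)^{\!n}\phi(z)\,dz .
\]
Since $FWER$ is convex in $\rho$ precisely when $H_n$ is concave, the whole question reduces to controlling the sign of $H_n''(\rho)$. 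Before differentiating I would record the asymptotics forced by $n\alpha_n\to\alpha$: namely $\alpha_n\to0$, $c_n\to\infty$ with $c_n\sim\sqrt{2\log n}$, and the Mills-ratio relation $1-\Phi(c_n)\sim\phi(c_n)/c_n$, so that $n\,\phi(c_n)/c_n\to\alpha$. These should be the only facts about the sequence $\alpha_n$ that enter.

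Next I would differentiate $H_n$ twice under the integral sign (justified by the smoothness of the integrand together with the Gaussian tails, via dominated convergence). Writing $t=t(\rho,z)=(c_n-\sqrt{\rho}\,z)/\sqrt{1-\rho}$ and using $\phi'=-t\phi$, the second derivative $H_n''(\rho)$ is the integral against $\phi(z)\,dz$ of
\[
n\,\Phi(t)^{n-2}\phi(t)\Big[(n-1)\,\phi(t)\,(\partial_\rho t)^2+\Phi(t)\big(\partial_\rho^2 t-t\,(\partial_\rho t)^2\big)\Big].
\]
The target is to show that after integration this is $\le 0$ in the limit $n\to\infty$. The key structural feature is the competition between the manifestly positive term $(n-1)\phi(t)(\partial_\rho t)^2$, arising from the $n$-th power, and the sign-indefinite term $\partial_\rho^2 t-t(\partial_\rho t)^2$; the claim is that once the factor $\Phi(t)^{n}=\exp(n\log\Phi(t))$ is accounted for, the net contribution is concave.

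To determine the sign I would treat $H_n''$ by a Laplace/saddle-point analysis in $z$: the integrand behaves like $\exp\!\big(n\log\Phi(t(\rho,z))-z^2/2\big)$ times polynomially growing prefactors, so its mass concentrates in a narrow, $n$-dependent window of $z$ whose location is pinned down by $n\,\phi(c_n)/c_n\to\alpha$. Inserting $\log\Phi(t)\sim-\phi(t)/t$ and the explicit forms of $\partial_\rho t$ and $\partial_\rho^2 t$, I would extract the leading-order term of each of the three pieces and verify that their sum is asymptotically non-positive, uniformly for $\rho$ in compact subsets of $(0,1)$. The endpoints $\rho\to0$ and $\rho\to1$ must be handled separately, since there $\partial_\rho t$ and $\partial_\rho^2 t$ blow up like negative powers of $\rho$ and $1-\rho$; at these endpoints the relevant limits are obtained directly from $H_n(0)=(1-\alpha_n)^n\to e^{-\alpha}$ and $H_n(1)=1-\alpha_n\to1$, whose positions relative to the interior values already force concavity of the limiting profile.

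The main obstacle I anticipate is exactly this asymptotic sign determination. The derivatives $\partial_\rho t$ and $\partial_\rho^2 t$ grow like $(1-\rho)^{-3/2}$ and faster near $\rho=1$ and like $\rho^{-1/2}$ and faster near $\rho=0$, the three competing terms carry different powers of $n$, and the region of concentration drifts with $n$; the delicate point is to show that the positive $(n-1)$-term does not overwhelm the remaining ones and that the net sign is controlled after the Laplace reduction. Equally delicate, and what ultimately makes the statement an \emph{asymptotic} rather than an exact finite-$n$ one, is the rigorous justification for interchanging the $n\to\infty$ limit with both the differentiation in $\rho$ and the integration in $z$.
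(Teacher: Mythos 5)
Your setup is fine and matches the framework this paper records: the one-factor decomposition, the reduction of convexity of $FWER$ to concavity of $H_n$, the asymptotics $c_n\sim\sqrt{2\ln n}$ and $n\phi(c_n)/c_n\to\alpha$, and the formula you display for $H_n''$ are all correct. (Note that the paper itself does not reprove Theorem \ref{thm2.1} --- it is quoted from Das and Bhandari \cite{r1}; the paper only records the representation $H_n(\rho)=\mathbb{E}\big[\Phi^n\big((c_{\alpha,n}+\sqrt{\rho}Z)/\sqrt{1-\rho}\big)\big]$ and later derives Theorem \ref{thm2.1} as a byproduct of its own Theorem \ref{thm3.1}.) But your proposal stops exactly where the theorem begins: the assertion that the three competing terms in $H_n''$ integrate to something asymptotically non-positive is never verified, only promised. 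That sign determination is the entire mathematical content of the statement, so as written this is a plan, not a proof.

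More seriously, the architecture itself cannot deliver the theorem. A pointwise-in-$\rho$ statement of the form $\limsup_n H_n''(\rho)\le 0$ does not imply that the pointwise limit of $H_n$ is concave: take $G_n(\rho)=e^{-n^2(\rho-\rho_0)^2}$, for which $\limsup_n G_n''(\rho)\le 0$ at \emph{every} $\rho$, yet the pointwise limit (the indicator of $\{\rho_0\}$) is not concave. To pass concavity to the limit you would need uniform control of $H_n''$, and uniformity provably fails here, precisely where the content of the theorem lies: the limiting FWER is \emph{discontinuous} at $\rho=0$ (it equals $1-e^{-\alpha}$ at $0$ and $0$ on $(0,1]$), and the paper's simulation table shows that the finite-$n$ FWER is not convex at all --- it dips, rises, then falls --- with the convexity-violating transition region shrinking toward $\rho=0$ as $n$ grows. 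So there is no finite-$n$ convexity statement available to pass to the limit, and no uniform derivative bound on $[0,1]$ can exist. The only way to make ``asymptotically convex'' both precise and provable is to identify the limit function itself, which is exactly the route this paper takes: $\lim_n H_n(\rho)=1$ for $\rho\in(0,1]$ (Lemmas \ref{l2}--\ref{l3} plus dominated convergence, i.e.\ Theorem \ref{thm3.1}) together with $H_n(0)=(1-\alpha_n)^n\to e^{-\alpha}$, after which the limiting FWER equals $1-e^{-\alpha}$ at $0$ and $0$ elsewhere and is convex by inspection (Remark 2). Tellingly, your endpoint discussion already appeals to ``the interior values'' --- but knowing those interior values \emph{is} the hard theorem you would first have to prove, and once you have it, the entire Laplace/second-derivative machinery becomes unnecessary.
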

For Bonferroni’s procedure, $\alpha_n = \alpha/n$ and thus Theorem \ref{thm2.1} also applies for Bonferroni’s method. Moreover, the following corollary is established using Theorem \ref{thm2.1} in~\cite{r1}:
\begin{corollary}\label{c1}
Given any $\alpha \in (0,1)$ and $\rho \in [0,1]$, $FWER(n, \alpha, \rho)$ is asymptotically bounded by $\alpha(1-\rho)$.
\end{corollary}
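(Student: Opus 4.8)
The plan is to exploit the asymptotic convexity of $FWER(n,\alpha,\rho)$ in $\rho$ supplied by Theorem~\ref{thm2.1}, and then dominate the limiting convex function by the chord joining its two boundary values at $\rho=0$ and $\rho=1$. Writing $g(\rho)=\lim_{n\to\infty} FWER(n,\alpha,\rho)$, Theorem~\ref{thm2.1} (with $\alpha_n=\alpha/n$, so that $n\alpha_n\to\alpha$) tells us $g$ is convex on $[0,1]$. Since a convex function lies below each of its chords, and $\rho=(1-\rho)\cdot 0+\rho\cdot 1$, I would immediately obtain
\[
g(\rho)\le (1-\rho)\,g(0)+\rho\,g(1),\qquad \rho\in[0,1].
\]

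Next I would evaluate the two endpoints explicitly. At $\rho=0$ the coordinates $X_1,\dots,X_n$ are independent, so under $H_0$ the union bound is exact in reverse and
\[
FWER(n,\alpha,0)=1-\bigl(1-\tfrac{\alpha}{n}\bigr)^{n}\longrightarrow 1-e^{-\alpha},
\]
giving $g(0)=1-e^{-\alpha}$. At $\rho=1$ the equicorrelation structure collapses to a single common factor, $X_i=Z$ for all $i$ with $Z\sim\mathcal{N}(0,1)$, so the union event reduces to $\{Z>c_{\alpha,n}\}$ and $FWER(n,\alpha,1)=\alpha/n\to 0$, i.e. $g(1)=0$. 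Substituting these into the chord inequality yields $g(\rho)\le (1-\rho)(1-e^{-\alpha})$.

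To reach the stated form I would finish with the elementary bound $1-e^{-\alpha}\le \alpha$ (equivalently $e^{-\alpha}\ge 1-\alpha$), which upgrades the estimate to $g(\rho)\le \alpha(1-\rho)$, exactly the claimed asymptotic bound. Note this route shows the chord bound is in fact slightly sharper than the advertised $\alpha(1-\rho)$, the latter being the cleaner linear-in-$\rho$ statement.

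The main obstacle is not the algebra but the justification of the endpoint value at $\rho=1$, where the covariance matrix degenerates and one must make sure the limiting convex function is genuinely being evaluated at the perfectly-correlated boundary rather than via a one-sided limit $\rho\uparrow 1$. I would resolve this either by appealing to continuity of $g$ on the closed interval $[0,1]$ (convexity on the interior together with boundary control), or, more transparently, by using the common-factor representation $X_i=\sqrt{\rho}\,Z+\sqrt{1-\rho}\,\varepsilon_i$ with $Z,\varepsilon_i$ i.i.d.\ standard normal, which exhibits the boundary behavior directly and lets one pass to the limit in $n$ at $\rho=1$ without ambiguity.
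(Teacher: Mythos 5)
Your proposal is correct and follows essentially the same route as the paper's (the corollary is established in Das and Bhandari~\cite{r1} via Theorem~\ref{thm2.1}): asymptotic convexity of $FWER$ in $\rho$, the chord bound between the endpoints, and the endpoint evaluations $g(0)=1-e^{-\alpha}\le\alpha$ and $g(1)=0$. Nothing is missing; your observation that the chord bound $(1-\rho)(1-e^{-\alpha})$ is slightly sharper than $\alpha(1-\rho)$ is also consistent with the intended argument.
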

To prove Theorem \ref{thm2.1}, Das and Bhandari~\cite{r1} consider the function $H_{n}(\rho)=1-FWER(n, \alpha, \rho)$ and study its behaviour.
Evidently the sequence $\left\{X_{r}\right\}_{r \geq 1}$ is exchangeable under $H_{0}$ for the equicorrelated set-up. In other words,  $$\left(X_{i_{1}}, \ldots, X_{i_{k}}\right) \sim N_{k}\left(\mathbf{0}_{\mathbf{k}},(1-\rho) I_{k}+\rho J_{k}\right))$$ 
where $J_{k}$ is the $k \times k$ matrix of all ones. Then, for each $i \geq 1$, $X_{i}=\theta+Z_{i}$
where $\theta$ is a normal random variable having mean $0$, independent of $\left\{Z_{n}\right\}_{n \geq 1}$ and $Z_{i}$’s are i.i.d normal random variables. $\operatorname{Cov}\left(X_{i}, X_{j}\right)=\rho$ gives $\operatorname{Var}(\theta)=\rho$. This implies
$\theta \sim \mathcal{N}(0, \rho)$ and $Z_{i} \overset{iid}{\sim} \mathcal{N}(0,1-\rho)$ for each $i \geq 1$. Thus,
\begin{eqnarray}
H_{n}(\rho) & = & \mathbb{P}\left(\theta+Z_{i} \leq c_{\alpha,n} \quad \forall i=1,2, \ldots, n \mid H_{0}\right) \nonumber\\
            & = & \mathbb{E}_{\theta}\left[\Phi\left(\frac{c_{\alpha,n} -\theta}{\sqrt{1-\rho}}\right)^{n}\right]\\
            & = & \mathbb{E}\left[\Phi\left(\frac{c_{\alpha,n} +\sqrt{\rho} Z}{\sqrt{1-\rho}}\right)^{n}\right] \quad \text{(where $Z \sim N(0,1)$)}.\nonumber
\end{eqnarray}
We study the limiting behavior of $H_{n}(\rho)$ as $n \to \infty$ in the next section.
\section{Main Results} Firstly, we state the foremost theoretical result of this work.
\begin{theorem}\label{thm3.1}
Given any $\alpha \in (0,1)$ and $\rho \in (0,1]$, $\lim_{n \to \infty} FWER(n, \alpha, \rho) = 0.$
\end{theorem}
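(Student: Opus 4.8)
The plan is to prove the equivalent statement that $H_n(\rho) = 1 - FWER(n,\alpha,\rho) \to 1$, working from the representation derived in Section~2,
\[
H_n(\rho) = \mathbb{E}\left[\Phi\left(\frac{c_{\alpha,n} + \sqrt{\rho}\,Z}{\sqrt{1-\rho}}\right)^n\right], \qquad Z \sim \mathcal{N}(0,1).
\]
The case $\rho = 1$ is degenerate and immediate: there $X_1 = \cdots = X_n$ almost surely, so $FWER = \mathbb{P}(X_1 > c_{\alpha,n}) = \alpha/n \to 0$. Hence I would focus on $\rho \in (0,1)$, where the strategy is to establish \emph{pointwise} convergence of the integrand to $1$ for every fixed $z$ and then pass to the limit inside the expectation.

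First I would pin down the growth of the Bonferroni cutoff. Inverting $1 - \Phi(c_{\alpha,n}) = \alpha/n$ together with the standard Mills-ratio asymptotic $1 - \Phi(x) \sim \phi(x)/x$ yields $c_{\alpha,n}^2 = 2\log n\,(1 + o(1))$; in particular $c_{\alpha,n}/\sqrt{2\log n} \to 1$ and $c_{\alpha,n} \to \infty$.

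Next, writing $w_n = w_n(z) = (c_{\alpha,n} + \sqrt{\rho}\,z)/\sqrt{1-\rho}$, the integrand is $\Phi(w_n)^n = [1 - (1 - \Phi(w_n))]^n$. Since $w_n \to \infty$ we have $1 - \Phi(w_n) \to 0$, so $\Phi(w_n)^n \to 1$ precisely when $n\,(1 - \Phi(w_n)) \to 0$. Using $1 - \Phi(w_n) \le \phi(w_n)/w_n$ I would bound
\[
n\,(1 - \Phi(w_n)) \le \frac{n}{w_n \sqrt{2\pi}}\exp\!\left(-\frac{(c_{\alpha,n} + \sqrt{\rho}\,z)^2}{2(1-\rho)}\right),
\]
and examine the resulting exponent
\[
\log n - \frac{c_{\alpha,n}^2 + 2\sqrt{\rho}\,z\,c_{\alpha,n} + \rho z^2}{2(1-\rho)}.
\]
Substituting $c_{\alpha,n}^2 = 2\log n\,(1+o(1))$, the $\log n$ contributions combine to $\left(1 - \tfrac{1}{1-\rho}\right)\log n\,(1+o(1)) = -\tfrac{\rho}{1-\rho}\,\log n\,(1+o(1))$, while the cross term contributes only $O(\sqrt{\log n})$ (since $c_{\alpha,n} = O(\sqrt{\log n})$) and the last term is constant in $n$. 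Because $\rho > 0$, the $-\tfrac{\rho}{1-\rho}\log n$ term dominates and drives the exponent to $-\infty$; hence $n\,(1-\Phi(w_n)) \to 0$ and $\Phi(w_n)^n \to 1$ for every fixed $z \in \mathbb{R}$.

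Finally, since $0 \le \Phi(w_n)^n \le 1$ for all $n$ and all $z$, the constant $1$ is an integrable dominating function for the probability law of $Z$, so the dominated convergence theorem gives $H_n(\rho) = \mathbb{E}[\Phi(w_n)^n] \to 1$, i.e. $FWER(n,\alpha,\rho) \to 0$. I expect the main obstacle to be the rate bookkeeping in the exponent above: one must confirm that the $O(\sqrt{\log n})$ cross term arising from $2\sqrt{\rho}\,z\,c_{\alpha,n}$ cannot overcome the dominant $\Theta(\log n)$ term for any fixed $z$, and in particular for large negative $z$, where this cross term is positive. Once that domination is secured, the limiting passage is routine precisely because the integrand is uniformly bounded by $1$.
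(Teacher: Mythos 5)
Your proposal is correct, and it reaches the pointwise limit by a genuinely different mechanism than the paper, even though the outer skeleton (reduce to $H_n(\rho)\to 1$, use the representation $H_n(\rho)=\mathbb{E}[\Phi^n((c_{\alpha,n}+\sqrt{\rho}Z)/\sqrt{1-\rho})]$, prove pointwise convergence of the integrand, finish with dominated convergence, and treat $\rho=1$ separately) is the same. The paper's central step (its Lemmas \ref{l2} and \ref{l3}) avoids any tail-asymptotic bookkeeping: it rewrites $\sqrt{2\ln n}/\sqrt{1-\rho}=\sqrt{2\ln m}$ with $m=n^{1/(1-\rho)}$, recognizes $[\Phi(c_{\alpha,m})]^m=H_m(0)=(1-\alpha/m)^m\to e^{-\alpha}$ from the independent case, and so gets $[\Phi(\sqrt{2\ln n}/\sqrt{1-\rho})]^n\approx (e^{-\alpha})^{n^{-\rho/(1-\rho)}}\to 1$, handling the shift $t$ (and hence $\sqrt{\rho}Z$) afterwards by a monotonicity argument that trades $t$ for a slightly smaller correlation $\rho_2$. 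You instead estimate $n(1-\Phi(w_n(z)))$ directly via the Mills-ratio bound $1-\Phi(x)\le\phi(x)/x$ (valid since $w_n(z)>0$ eventually for each fixed $z$) and show the exponent $\log n - w_n(z)^2/2 = -\tfrac{\rho}{1-\rho}\log n\,(1+o(1)) + O(\sqrt{\log n})\to-\infty$; your bookkeeping is right, including the observation that the adverse $O(\sqrt{\log n})$ cross term for negative $z$ cannot beat the $\Theta(\log n)$ term, and the equivalence $(1-a_n)^n\to 1 \iff na_n\to 0$ (for $a_n\to 0$) is sound. What each approach buys: yours is more self-contained and handles the random shift in one stroke, with no need for the paper's Lemma \ref{l3} or the monotonicity-in-$\rho$ device, and it adapts easily to other cutoffs of the same growth; the paper's substitution trick is more elementary (no exponent expansion at all) and, as a by-product, produces the quantitative approximation $FWER(n,\alpha,\rho)\approx 1-e^{-\alpha n^{-\rho/(1-\rho)}}$ recorded in its Remark 1, which your qualitative limit argument does not immediately yield.
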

Theorem \ref{thm3.1}, a much stronger result of Corollary \ref{c1}, highlights the fundamental problem of Bonferroni method as a multiple testing procedure. Note that in proving Theorem \ref{thm3.1}, it suffices to show $\lim_{n \to \infty}H_{n}(\rho) =1$. This will be established in the following steps:
\begin{enumerate}
    \item Finding an approximation for $c_{\alpha,n}$ for large $n$.
    \item With the help of the approximation, showing for each $\alpha$ and $\rho$ in $(0,1)$,
$$\lim_{n \to \infty} \Bigg[\Phi\bigg(\frac{c_{\alpha,n}}{\sqrt{1-\rho}} \bigg)\Bigg]^{n} = 1.$$
    \item Showing that for any fixed real number $t$ and for each $\alpha$, $\rho$ in $(0,1)$, we have
$$\lim_{n \to \infty} \Bigg[\Phi\bigg(\frac{c_{\alpha, n}+t}{\sqrt{1-\rho}}\bigg)\Bigg]^{n}= 1.$$ Then, applying dominated convergence theorem to show $H_{n}(\rho) \longrightarrow1$ as $n \to \infty$.
\end{enumerate}
We explicate the steps in three lemmas.

\begin{lemma}\label{l1}
Given any $\alpha \in (0,1)$, $c_{\alpha,n} \leq \sqrt{2 \ln(n)}$ for all sufficiently large $n$. Also, $\frac{c_{\alpha,n}} {\sqrt{2 \ln(n)}} \longrightarrow1$ as $n \to \infty$.
\end{lemma}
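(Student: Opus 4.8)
The plan is to work directly from the defining relation $1 - \Phi(c_{\alpha,n}) = \alpha/n$ together with the classical Mills-ratio bounds for the Gaussian tail,
\begin{equation}
\left(\frac{1}{x} - \frac{1}{x^{3}}\right)\phi(x) \le 1 - \Phi(x) \le \frac{\phi(x)}{x}, \qquad x > 0. \label{mills}
\end{equation}
Since $1 - \Phi(c_{\alpha,n}) = \alpha/n \to 0$, monotonicity of $\Phi$ forces $c_{\alpha,n} \to \infty$, so \eqref{mills} is applicable for all sufficiently large $n$.

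For the first assertion I would compare tail probabilities rather than quantiles directly. Because $\Phi$ is strictly increasing, the inequality $c_{\alpha,n} \le \sqrt{2\ln n}$ is equivalent to $1 - \Phi(\sqrt{2\ln n}) \le \alpha/n$. Plugging $x = \sqrt{2\ln n}$ into the upper bound of \eqref{mills} and using the identity $\phi(\sqrt{2\ln n}) = (2\pi)^{-1/2}\,n^{-1}$ gives
\[
1 - \Phi(\sqrt{2\ln n}) \le \frac{1}{\sqrt{2\pi}\,\sqrt{2\ln n}}\cdot\frac{1}{n}.
\]
Since the prefactor $(\sqrt{2\pi}\,\sqrt{2\ln n})^{-1}$ tends to $0$, it is eventually smaller than $\alpha$, which yields $1 - \Phi(\sqrt{2\ln n}) \le \alpha/n$ for all large $n$ and hence the claimed bound.

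For the second assertion I would take logarithms in $1 - \Phi(c_{\alpha,n}) = \alpha/n$. Writing $c_n := c_{\alpha,n}$ and sandwiching $\ln\bigl(1 - \Phi(c_n)\bigr)$ by \eqref{mills}, the lower bound contributes only $\ln(1 - c_n^{-2}) = o(1)$, so
\[
\ln\bigl(1 - \Phi(c_n)\bigr) = -\frac{c_n^{2}}{2} - \ln c_n - \tfrac12\ln(2\pi) + o(1).
\]
Equating this with $\ln\alpha - \ln n$, rearranging, and dividing by $\ln n$ produces
\[
\frac{c_n^{2}}{2\ln n} = 1 - \frac{\ln\alpha}{\ln n} - \frac{\ln c_n}{\ln n} - \frac{\ln(2\pi)}{2\ln n} + \frac{o(1)}{\ln n}.
\]
The first part of the lemma gives $c_n \le \sqrt{2\ln n}$, whence $\ln c_n \le \tfrac12\ln(2\ln n) = O(\ln\ln n)$ and therefore $\ln c_n/\ln n \to 0$; every remaining term on the right also vanishes. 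Consequently $c_n^{2}/(2\ln n) \to 1$, and taking square roots gives $c_{\alpha,n}/\sqrt{2\ln n} \to 1$.

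The one delicate point is the self-referential term $\ln c_n$ in the expansion: a priori the size of $c_n$ is unknown, so this term cannot simply be discarded. The resolution is exactly the bootstrap provided by the first assertion — the crude bound $c_n \le \sqrt{2\ln n}$ pins $\ln c_n$ at the $\ln\ln n$ scale, which is negligible against $\ln n$. This is why I would establish the two assertions in this order, using the first to unlock the second.
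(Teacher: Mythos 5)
Your proof is correct and takes essentially the same route as the paper: the paper deduces the lemma from Gordon's Mills-ratio inequalities applied to the identity $1-\Phi(c_{\alpha,n})=\alpha/n$, which is exactly the sandwich you use (your lower bound $(x^{-1}-x^{-3})\phi(x)$ is a standard variant of Gordon's $x\phi(x)/(1+x^{2})$). The only cosmetic difference is that for the first assertion you evaluate the Gaussian tail at $\sqrt{2\ln n}$ and compare it with $\alpha/n$, rather than substituting $c_{\alpha,n}$ itself into the inequalities; both yield the bound and the asymptotic ratio in the same way.
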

The lemma follows from utilizing the following well-known result by Gordon~\cite{r6} once one replaces $x$ by $c_{\alpha,n}$ and observes that $\Phi(c_{\alpha,n})=1-\alpha/n$.

\begin{theorem}[Gordon]\label{thm3.2}
For arbitrary positive number $x>0$, the inequalities
$$
\frac{x \phi(x)}{1+x^{2}} <1-\Phi(x)<\frac{\phi(x)}{x}
$$
hold. In particular, 
$$\lim_{x \to \infty}\frac{x(1-\Phi(x))}{\phi(x)} = 1.$$
\end{theorem}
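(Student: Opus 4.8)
The plan is to establish the two-sided inequality first and then recover the limit by a squeeze argument. Throughout I would use only the defining identity $\phi'(t) = -t\phi(t)$ together with $1-\Phi(x) = \int_x^\infty \phi(t)\,dt$.

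For the upper bound $1-\Phi(x) < \phi(x)/x$, I would dominate the integrand on $(x,\infty)$. Since $t/x > 1$ for every $t > x > 0$,
$$1-\Phi(x) = \int_x^\infty \phi(t)\,dt < \frac{1}{x}\int_x^\infty t\phi(t)\,dt = \frac{1}{x}\big[-\phi(t)\big]_x^\infty = \frac{\phi(x)}{x},$$
where the middle equality is exact because $t\phi(t) = -\phi'(t)$, and the inequality is strict since $t/x$ strictly exceeds $1$ on the interior of the range of integration.

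The lower bound $x\phi(x)/(1+x^2) < 1-\Phi(x)$ is the main obstacle, since the crude domination above is too lossy to reach it (and, indeed, iterating integration by parts produces only the weaker bound $\phi(x)(x^2-1)/x^3$). Here I would introduce the auxiliary function
$$f(x) = \big(1-\Phi(x)\big) - \frac{x\phi(x)}{1+x^2}$$
and prove $f > 0$ on $(0,\infty)$ by a monotonicity-plus-boundary argument. Differentiating the quotient and using $\phi' = -t\phi$, its numerator contains the expression $(1-x^2)(1+x^2) - 2x^2 = 1 - 2x^2 - x^4$; after combining $f'$ over the common denominator $(1+x^2)^2$, the numerator $(1+x^2)^2 + (1 - 2x^2 - x^4)$ collapses to the constant $2$, giving
$$f'(x) = -\frac{2\phi(x)}{(1+x^2)^2} < 0.$$
Thus $f$ is strictly decreasing; since both $1-\Phi(x) \to 0$ and $x\phi(x)/(1+x^2) \to 0$ as $x \to \infty$, we have $f(x) \to 0$, so a strictly decreasing function with limit $0$ at $+\infty$ must be strictly positive throughout $(0,\infty)$. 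The one delicate point is the algebraic cancellation in $f'$: it is exactly this collapse to a constant that singles out $1+x^2$ as the correct denominator, and getting that computation right is the crux of the lemma.

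Finally, for the limit I would rewrite the established inequalities (after multiplying through by $x/\phi(x) > 0$) as
$$\frac{x^2}{1+x^2} < \frac{x\big(1-\Phi(x)\big)}{\phi(x)} < 1,$$
valid for all $x > 0$. Since $x^2/(1+x^2) \to 1$ as $x \to \infty$, the squeeze theorem forces $x(1-\Phi(x))/\phi(x) \to 1$, which completes the proof.
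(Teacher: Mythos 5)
Your proof is correct, and there is nothing in the paper to compare it against: the paper does not prove this statement at all, but quotes it as a known result of Gordon (1941) and uses it as a black box to obtain Lemma \ref{l1} (the asymptotics of $c_{\alpha,n}$). What you have supplied is a self-contained elementary derivation, and each step checks out. The upper bound is the standard domination $\phi(t)<(t/x)\phi(t)$ on $(x,\infty)$ followed by the exact integration $\int_x^\infty t\phi(t)\,dt=\phi(x)$ from $\phi'(t)=-t\phi(t)$. For the lower bound, your computation of the derivative of $f(x)=\bigl(1-\Phi(x)\bigr)-x\phi(x)/(1+x^2)$ is right: the quotient-rule numerator is $\phi(x)\bigl(1-2x^2-x^4\bigr)$, and the cancellation $(1+x^2)^2+\bigl(1-2x^2-x^4\bigr)=2$ gives $f'(x)=-2\phi(x)/(1+x^2)^2<0$; combined with $f(x)\to 0$ as $x\to\infty$, strict positivity of $f$ on $(0,\infty)$ follows, which is exactly the claimed strict inequality. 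Your side remark is also accurate: iterated integration by parts yields only $\phi(x)(x^2-1)/x^3$, which is weaker than $x\phi(x)/(1+x^2)$ for all $x>0$ and vacuous for $x\le 1$, so the monotonicity argument (or something equally sharp) is genuinely needed. Finally, the squeeze $x^2/(1+x^2)<x\bigl(1-\Phi(x)\bigr)/\phi(x)<1$ delivers the limit, which is the only part of the theorem the paper actually uses downstream.
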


\begin{lemma}\label{l2}
For each $\alpha$ and $\rho$ in $(0,1)$, 
\begin{equation}
    \lim_{n \to \infty} \Bigg[\Phi\bigg(\frac{c_{\alpha,n}}{\sqrt{1-\rho}} \bigg)\Bigg]^{n} = 1.
\end{equation}
\end{lemma}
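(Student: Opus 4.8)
The plan is to take logarithms and reduce the claim to a statement about a single Gaussian tail. Writing $a_n = c_{\alpha,n}/\sqrt{1-\rho}$, we have $\big[\Phi(a_n)\big]^{n} = \exp\big(n \ln \Phi(a_n)\big)$, so it suffices to show $n\ln\Phi(a_n) \to 0$. Since Lemma \ref{l1} gives $c_{\alpha,n}\to\infty$, we get $a_n \to \infty$ and hence $1-\Phi(a_n)\to 0$; using $\ln(1-u)\sim -u$ as $u\to 0$, the problem reduces to proving $n\big(1-\Phi(a_n)\big)\to 0$.

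Next I would control the tail via Gordon's inequality (Theorem \ref{thm3.2}), $1-\Phi(a_n) < \phi(a_n)/a_n$, so that $n\big(1-\Phi(a_n)\big) < n\,\phi(a_n)/a_n$. The crux is then to estimate $\phi(a_n)$, and the key observation is that $\phi(a_n)$ is a power of the corresponding quantity at $c_{\alpha,n}$. Indeed $\phi(a_n)=\tfrac{1}{\sqrt{2\pi}}\exp\big(-c_{\alpha,n}^2/(2(1-\rho))\big)$, so that $\exp(-c_{\alpha,n}^2/2)$ is raised to the power $1/(1-\rho)$.

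Now I would extract the order of $\exp(-c_{\alpha,n}^2/2)$ from the defining relation $1-\Phi(c_{\alpha,n})=\alpha/n$. By the asymptotic in Theorem \ref{thm3.2}, $\alpha/n = 1-\Phi(c_{\alpha,n}) \sim \phi(c_{\alpha,n})/c_{\alpha,n}$, which together with Lemma \ref{l1} yields $\exp(-c_{\alpha,n}^2/2) = O(\sqrt{\ln n}/n)$. Substituting, and setting $\beta := 1/(1-\rho) > 1$ (since $\rho \in (0,1)$), gives $n\,\phi(a_n)/a_n = O\big(n^{1-\beta}(\ln n)^{(\beta-1)/2}\big)$. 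Because $\beta>1$, the polynomial factor $n^{1-\beta}\to 0$ dominates the polylogarithmic factor, so the bound tends to $0$; this forces $n\big(1-\Phi(a_n)\big)\to 0$ and completes the argument.

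The main obstacle, and the whole point of the lemma, is the exponent bookkeeping in the last step: one must verify that raising $\exp(-c_{\alpha,n}^2/2)=O(\sqrt{\ln n}/n)$ to the power $1/(1-\rho)$ produces a factor that beats the multiplicative $n$ by a genuine polynomial margin. It is precisely the strict inequality $\rho>0$, making $\beta>1$, that creates this margin; at $\rho=0$ one has $\beta=1$ and the product fails to vanish, consistent with the Bonferroni FWER tending to $\alpha$ rather than to $0$ in the independent case. Some care is also needed to confirm that the $\ln(1-u)\sim -u$ replacement is legitimate, but this is routine once $1-\Phi(a_n)\to 0$ has been established.
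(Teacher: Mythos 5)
Your proof is correct, but it takes a genuinely different route from the paper's. You linearize the logarithm ($n\ln\Phi(a_n)\sim -n(1-\Phi(a_n))$ with $a_n=c_{\alpha,n}/\sqrt{1-\rho}$) and then kill the tail term by direct estimation, combining Gordon's upper bound $1-\Phi(a_n)<\phi(a_n)/a_n$ with the defining relation $1-\Phi(c_{\alpha,n})=\alpha/n$ to get $\exp(-c_{\alpha,n}^2/2)=O(\sqrt{\ln n}/n)$, whence $n(1-\Phi(a_n))=O\big(n^{1-\beta}(\ln n)^{(\beta-1)/2}\big)\to 0$ for $\beta=1/(1-\rho)>1$. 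The paper instead avoids logarithms and any tail estimate at the rescaled point: it observes that $\Phi\big(\sqrt{2\ln n}/\sqrt{1-\rho}\big)=\Phi\big(\sqrt{2\ln m}\,\big)$ with $m=n^{1/(1-\rho)}$, so that $\big[\Phi\big(\sqrt{2\ln n}/\sqrt{1-\rho}\big)\big]^n=\Big(\big[\Phi\big(\sqrt{2\ln m}\,\big)\big]^m\Big)^{n^{-\rho/(1-\rho)}}$, bounds the inner quantity below by $H_m(0)=(1-\alpha/m)^m\to e^{-\alpha}$ (the independent-case limit), and notes that a quantity bounded away from zero raised to the vanishing power $n^{-\rho/(1-\rho)}$ tends to $1$; a slack parameter $\rho_1\in(0,\rho]$ then transfers the conclusion from $\sqrt{2\ln n}$ to $c_{\alpha,n}$ via Lemma \ref{l1}. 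Both arguments hinge on the same mechanism, namely that the exponent $1/(1-\rho)>1$ creates a polynomial margin over the multiplicative factor $n$, but your version is more quantitative: it produces an explicit rate $O\big(n^{1-\beta}(\ln n)^{(\beta-1)/2}\big)$, which matches, up to logarithmic factors, the approximation $FWER(n,\alpha,\rho)\approx 1-e^{-\alpha\, n^{-\rho/(1-\rho)}}$ stated in the paper's Remark 1 (since $1-\beta=-\rho/(1-\rho)$), and it makes completely transparent where positivity of $\rho$ enters. The paper's substitution trick is shorter, invokes Gordon's bounds only through Lemma \ref{l1}, and has the advantage of directly setting up that exponential approximation by reusing the Poisson-type limit $(1-\alpha/n)^n\to e^{-\alpha}$.

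One small correction to your closing aside: in the independent case $\rho=0$ the Bonferroni FWER tends to $1-e^{-\alpha}$, not to $\alpha$ (cf.\ Remark 2 of the paper); this does not affect your argument, which only needs that the product fails to vanish when $\beta=1$.
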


\begin{proof}[Proof of Lemma \ref{l2}] Observe that

\begin{eqnarray}
\lim_{n \to \infty} \Bigg[\Phi\bigg(\frac{\sqrt{2 \ln n}}{\sqrt{1-\rho}}\bigg) \Bigg]^{n}& = & \lim_{n \to \infty} \Bigg[\Phi\bigg(\sqrt{2 \ln \left(n^{\frac{1}{1-\rho}} \right)}\bigg)\Bigg]^{n} \nonumber \\
    & = & \lim_{n \to \infty}  \Bigg[\Bigg[\Phi\bigg(\sqrt{2 \ln \left(n^{\frac{1}{1-\rho}} \right)}\bigg)\Bigg]^{n^{\frac{1}{1-\rho}}}\Bigg] ^{n^{\frac{-\rho}{1-\rho}}} \nonumber \\
    & = & \lim_{n \to \infty} \Bigg[ \bigg[\Phi(\sqrt{2 \ln m})\bigg]^{m}\Bigg]^{n^{\frac{-\rho}{1-\rho}}} \quad \text{(writing $m=n^{\frac{1}{1-\rho}}$).} \label{4}
\end{eqnarray}
Invoking Lemma \ref{l1}, we obtain from \eqref{4}
\begin{eqnarray}
    \lim_{n \to \infty} \Bigg[\Phi\bigg(\frac{\sqrt{2 \ln n}}{\sqrt{1-\rho}}\bigg) \Bigg]^{n} & \geq & \lim_{n \to \infty} \bigg[ \big[\Phi(c_{\alpha, m})\big]^{m}\bigg]^{n^{\frac{-\rho}{1-\rho}}} \nonumber \\
    & = & \lim_{n \to \infty} \big[H_{m}(0)\big]^{n^{\frac{-\rho}{1-\rho}}} \quad \text{(from definition of $H_{n}(\rho)$)} \nonumber \\
    & = & \lim_{n \to \infty} \big[e^{-\alpha}\big]^{n^{\frac{-\rho}{1-\rho}}},
\end{eqnarray}
where the last step emanates from the fact that, for $\rho=0$, $X_{i}$'s become independent and so $H_{n}(0)=\displaystyle \prod_{i=1}^n \mathbb{P}_{H_{0}}(X_i \leq c)=\left(1-\frac{\alpha}{n}\right)^n \to  e^{-\alpha}$ as $n \to \infty$. 

\noindent For $\rho=0$, $\displaystyle \lim_{n \to \infty} \big[e^{-\alpha}\big]^{n^{\frac{-\rho}{1-\rho}}} = e^{-\alpha}$ and for $\rho \in (0,1)$, $\displaystyle \lim_{n \to \infty} \big[e^{-\alpha}\big]^{n^{\frac{-\rho}{1-\rho}}} = 1$. Therefore, 
\begin{equation}
    \lim_{n \to \infty} \Bigg[\Phi\bigg(\frac{\sqrt{2 \ln n}}{\sqrt{1-\rho}}\bigg)\Bigg]^{n} = 1 \quad \text{for each }\rho \in (0,1). \label{e6}
\end{equation}

\noindent Lemma \ref{l1} enables us to choose $\rho_{1} \in (0, \rho]$ such that 
$$ \Bigg[\Phi\bigg(\frac{c_{\alpha, n}}{\sqrt{1-\rho}}\bigg)\Bigg]^{n} \geq \Bigg[\Phi\bigg(\frac{\sqrt{2 \ln n}}{\sqrt{1-\rho_{1}}}\bigg)\Bigg]^{n} \quad \text{for all sufficiently large $n$}.$$
The rest follows from \eqref{e6}.
\end{proof}
We shall now prove the following generalization of Lemma \ref{l2}:
\begin{lemma}\label{l3}
For any fixed real number $t$, we have
\begin{equation}
    \lim_{n \to \infty} \Bigg[\Phi\bigg(\frac{c_{\alpha, n}+t}{\sqrt{1-\rho}}\bigg) \Bigg]^{n}= 1 \quad \forall \alpha \in (0,1), \forall \rho \in (0,1). \label{e7}
\end{equation}
\end{lemma}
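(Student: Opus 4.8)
The plan is to mimic the proof of Lemma \ref{l2}, exploiting the fact that the additive constant $t$ is asymptotically negligible against the growing threshold $c_{\alpha,n}$. Since $\Phi \le 1$, each quantity $\left[\Phi\!\left(\tfrac{c_{\alpha,n}+t}{\sqrt{1-\rho}}\right)\right]^{n}$ is bounded above by $1$, so it suffices to produce a lower bound tending to $1$. When $t \ge 0$ this is immediate: monotonicity of $\Phi$ gives $\Phi\!\left(\tfrac{c_{\alpha,n}+t}{\sqrt{1-\rho}}\right) \ge \Phi\!\left(\tfrac{c_{\alpha,n}}{\sqrt{1-\rho}}\right)$, and Lemma \ref{l2} finishes the argument. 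The substance of the lemma therefore lies in the case $t<0$, where subtracting a constant shrinks the argument of $\Phi$ and Lemma \ref{l2} cannot be invoked directly.

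To treat $t<0$ (and in fact all $t$ uniformly), I would fix an arbitrary $\rho_{1} \in (0,\rho)$ and transfer the deficit caused by $t$ into the gap between $\rho_{1}$ and $\rho$. By Lemma \ref{l1}, $c_{\alpha,n}/\sqrt{2\ln n} \to 1$, and since $t$ is a fixed real number we also have $t/\sqrt{2\ln n}\to 0$; hence
$$\frac{c_{\alpha,n}+t}{\sqrt{2\ln n}\,\sqrt{1-\rho}} \longrightarrow \frac{1}{\sqrt{1-\rho}}.$$
Because $\rho_{1}<\rho$ forces $\tfrac{1}{\sqrt{1-\rho_{1}}}<\tfrac{1}{\sqrt{1-\rho}}$, this limit strictly exceeds $\tfrac{1}{\sqrt{1-\rho_{1}}}$, so for all sufficiently large $n$
$$\frac{c_{\alpha,n}+t}{\sqrt{1-\rho}} \ge \frac{\sqrt{2\ln n}}{\sqrt{1-\rho_{1}}}.$$

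With this pointwise inequality in hand, monotonicity of $\Phi$ yields, for large $n$,
$$\left[\Phi\!\left(\frac{c_{\alpha,n}+t}{\sqrt{1-\rho}}\right)\right]^{n} \ge \left[\Phi\!\left(\frac{\sqrt{2\ln n}}{\sqrt{1-\rho_{1}}}\right)\right]^{n},$$
and the right-hand side converges to $1$ by \eqref{e6} applied with $\rho_{1}\in(0,1)$ in place of $\rho$. Combined with the universal upper bound $\left[\Phi(\cdot)\right]^{n}\le 1$, this forces the limit in \eqref{e7} to equal $1$.

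The only delicate point — and the main obstacle — is the regime $t<0$, but the observation that dissolves it is that the fixed shift $t$ is swamped by $c_{\alpha,n}\sim\sqrt{2\ln n}\to\infty$. This permits me to absorb $t$ into an arbitrarily small perturbation of the correlation parameter and thereby reduce the claim to the already-established limit \eqref{e6}, requiring no estimates on $\Phi$ beyond those supplied (through Lemma \ref{l1}) by Gordon's inequalities.
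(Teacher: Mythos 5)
Your proof is correct and takes essentially the same approach as the paper: the paper likewise handles $t\ge 0$ by monotonicity and, for $t<0$, absorbs the fixed shift into a strictly smaller correlation parameter $\rho_2\in(0,\rho]$ (possible since $c_{\alpha,n}\to\infty$) before invoking Lemma \ref{l2}. Your only cosmetic deviation is comparing against $\sqrt{2\ln n}/\sqrt{1-\rho_1}$ and citing \eqref{e6} directly rather than comparing against $c_{\alpha,n}/\sqrt{1-\rho_2}$ and citing Lemma \ref{l2}, which is an equivalent use of the same idea.
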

\begin{proof}[Proof of Lemma \ref{l3}] For positive values of $t$, the result is immediate from Lemma \ref{l2} using the increasing property of $\Phi(\cdot)$. For negative values of $t$, we choose $\rho_{2} \in (0, \rho]$ such that 
$$ \Bigg[\Phi\bigg(\frac{c_{\alpha, n}+t}{\sqrt{1-\rho}}\bigg)\Bigg]^{n} \geq \Bigg[\Phi\bigg(\frac{c_{\alpha, n}}{\sqrt{1-\rho_2}}\bigg)\Bigg]^{n} \quad \text{for all sufficiently large $n$}.$$
Such a $\rho_{2}$ can always be chosen because $t$ is fixed and $c_{\alpha, n} \longrightarrow \infty$ as $n \to \infty$ for any $\alpha \in (0,1)$. An application of Lemma \ref{l2} gives us \eqref{e7}.
\end{proof}
We are now in a position to establish Theorem \ref{thm3.1}.

\begin{proof}[Proof of Theorem \ref{thm3.1}] Let $Z \sim N(0,1)$ and $\Phi^{n}(\cdot)$ denote $[\Phi(\cdot)]^{n}$.
We obtain from \eqref{e6} and \eqref{e7},
$$\lim_{n \to \infty} \Bigg[ \Phi^{n}\left(\frac{c_{\alpha,n} +\sqrt{\rho} Z}{\sqrt{1-\rho}}\right) - \Phi^{n}\bigg(\frac{c_{\alpha, n}}{\sqrt{1-\rho}}\bigg) \Bigg]= 0 \quad \text{almost everywhere in $Z$},$$
for each $\alpha$ and $\rho$ in  $(0,1)$. Moreover, it is easily seen that, for each $\alpha$ and $\rho$ in  $(0,1)$, 
$$\left|  \Phi^{n}\left(\frac{c_{\alpha,n} +\sqrt{\rho} Z}{\sqrt{1-\rho}}\right) - \Phi^{n}\bigg(\frac{c_{\alpha, n}}{\sqrt{1-\rho}}\bigg) \right| \leq 1.$$
Therefore, an application of dominated convergence theorem would yield,
$$\lim_{n \to \infty} \Bigg[ \mathbb{E}\bigg[\Phi^{n}\left(\frac{c_{\alpha,n} +\sqrt{\rho} Z}{\sqrt{1-\rho}}\right)\bigg] - \mathbb{E}\bigg[\Phi^{n}\bigg(\frac{c_{\alpha, n}}{\sqrt{1-\rho}}\bigg)\bigg] \Bigg]= 0.$$
Applying Lemma \ref{l2}, we have $\lim_{n \to \infty} H_{n}(\rho) = 1$ for each $\rho \in (0,1)$. Definition of $H_{n}(\rho)$ gives
$$\lim_{n \to \infty} FWER(n, \alpha, \rho) = 0 \quad \forall \rho \in (0,1).$$
Thus, the only thing remaining to show is $\lim_{n \to \infty} FWER(n, \alpha, 1) = 0.$ For $\rho=1$, $X_{i}=X_{j}$ almost surely $\forall i \neq j$. Consequently, one rejection would imply rejection of all null hypotheses and $H_{n}(\rho)= \mathbb{P}(X_{1} \leq c_{\alpha,n}) = 1- \alpha/n$. Therefore, in this case also, $H_{n}(\rho)$ tends to $1$ as $n \to \infty$, completing the proof.
\end{proof}
\begin{remark}
Proofs of Lemma \ref{l2} and Theorem \ref{thm3.1} provide us with the following approximation to FWER of Bonferroni procedure for large $n$:
$$FWER(n, \alpha, \rho) \approx 1 - e^{-\alpha \cdot n^{\frac{-\rho}{1-\rho}}}.$$
\end{remark}
\begin{remark}
We have seen in the proof of Lemma \ref{l2} that  $H_{n}(0) \longrightarrow e^{-\alpha}$ as $n \to \infty$. Thus, $FWER(n, \alpha, 0) \to 1 - e^{-\alpha}$ as $n \to \infty$. Combining this with Theorem \ref{thm3.1}, we obtain that the limiting FWER is a convex function, though discontinuous at $0$. Thus, Theorem \ref{thm3.1} does not contradict Theorem \ref{thm2.1} established by Das and Bhandari~\cite{r1}, rather implies it.
\end{remark}

We have considered an equicorrelated normal setup so far. However, problems involving variables with a more general dependence structure need to be addressed with more general correlation matrices. Hence, the study of the limiting behavior of FWER in arbitrarily correlated normal setups becomes crucial. Towards this, we consider the same \textit{Gaussian sequence model} as in Section 2, but now we assume $\operatorname{\mathbb{C}orr}\left(X_{i}, X_{j}\right)=\rho_{ij}$ for $i \neq j$ where $\rho_{ij} \in (0,1]$. Let $\mathbf{\Sigma}_{n}$ be the correlation matrix of $X_{1}, \ldots, X_{n}$ and $FWER(n, \alpha, \mathbf{\Sigma}_{n})$ denote the FWER of Bonferroni’s method under this setup. The following result is a generalization of Theorem \ref{thm3.1} for this setup.

\begin{theorem}\label{thm3.3}
Suppose $\liminf \rho_{ij}=\delta>0$. Then, for any $\alpha \in (0,1)$,
$$\lim_{n \to \infty}FWER(n,\alpha,\mathbf{\Sigma}_{n}) = 0.$$
\end{theorem}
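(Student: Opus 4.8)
The plan is to reduce the arbitrarily correlated problem to the equicorrelated one already settled in Theorem \ref{thm3.1}, using Slepian's Gaussian comparison inequality. The hypothesis $\liminf \rho_{ij} = \delta > 0$, read as $\lim_{N \to \infty} \inf_{i,j \geq N} \rho_{ij} = \delta$, means that for any fixed $\epsilon \in (0,\delta)$ there is an index $N = N(\epsilon)$, independent of $n$, such that $\rho_{ij} \geq \delta - \epsilon$ for all $i,j > N$ with $i \neq j$. The only correlations that can be troublesome (arbitrarily small, though still positive) involve one of the first $N$ coordinates, and these form a \emph{fixed} finite block whose contribution to the FWER I expect to be negligible as $n \to \infty$.

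First I would split the event by a union bound,
\begin{equation*}
FWER(n, \alpha, \mathbf{\Sigma}_n) \leq \mathbb{P}_{H_0}\bigg(\bigcup_{i=1}^{N} \{X_i > c_{\alpha,n}\}\bigg) + \mathbb{P}_{H_0}\bigg(\bigcup_{i=N+1}^{n} \{X_i > c_{\alpha,n}\}\bigg).
\end{equation*}
The first term is at most $N \cdot \mathbb{P}_{H_0}(X_1 > c_{\alpha,n}) = N\alpha/n$, which vanishes since $N$ is fixed. For the second term it suffices to show $\mathbb{P}_{H_0}(X_i \leq c_{\alpha,n} \ \forall\, i = N+1, \ldots, n) \to 1$. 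Under $H_0$ the sub-vector $(X_{N+1}, \ldots, X_n)$ is centered Gaussian with unit variances and all pairwise correlations at least $\delta - \epsilon$, so comparing it with an equicorrelated Gaussian vector $(Y_{N+1}, \ldots, Y_n)$ of common correlation exactly $\delta - \epsilon \in (0,1)$, Slepian's lemma yields
\begin{equation*}
\mathbb{P}_{H_0}(X_i \leq c_{\alpha,n} \ \forall\, i > N) \geq \mathbb{P}(Y_i \leq c_{\alpha,n} \ \forall\, i > N) = \mathbb{E}\Bigg[\Phi\bigg(\frac{c_{\alpha,n} + \sqrt{\delta - \epsilon}\, Z}{\sqrt{1 - (\delta-\epsilon)}}\bigg)^{n-N}\Bigg],
\end{equation*}
using the mixture representation of the equicorrelated model exactly as in Section 2. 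Since $\Phi \leq 1$ forces $\Phi^{\,n-N} \geq \Phi^{\,n}$, the right-hand side dominates $H_n(\delta - \epsilon)$ (with cutoff $c_{\alpha,n}$), which converges to $1$ by Theorem \ref{thm3.1}, equivalently by \eqref{e6} and \eqref{e7} together with dominated convergence. Hence the second term tends to $0$, and combining with the first completes the proof.

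The main obstacle is getting the Slepian comparison to line up correctly: one must verify the direction of the inequality (larger correlations give a larger probability of all coordinates staying below the threshold), check that the comparison matrix $(1-(\delta-\epsilon))I + (\delta-\epsilon)J$ is a legitimate covariance matrix, and confirm that replacing the exponent $n$ by $n-N$ while retaining the cutoff $c_{\alpha,n}$ does not disturb the limit. All three are controlled because $N$ is fixed and $\Phi \in (0,1)$, so the discrepancy between $n$ and $n-N$ hypotheses and the mismatch between $c_{\alpha,n}$ and $c_{\alpha,n-N}$ wash out. The crux is the uniform, $n$-free choice of $N$, which is exactly what the $\liminf$ condition supplies.
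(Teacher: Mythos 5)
Your proof is correct, and it rests on the same two pillars as the paper's own argument: Slepian's inequality (Theorem \ref{thm3.4}) to dominate the arbitrarily correlated probability by an equicorrelated one, followed by Theorem \ref{thm3.1}. The genuine difference is in how the hypothesis $\liminf \rho_{ij}=\delta>0$ is consumed. The paper applies Slepian in one shot to the full $n$-dimensional vector, comparing $\mathbf{\Sigma}_n$ with the equicorrelated matrix $M_n(\delta)$; that step silently requires $\rho_{ij}\geq\delta$ for \emph{every} pair, i.e., it reads the $\liminf$ as an $\inf$. Under the hypothesis as literally stated this need not hold: pairs involving a bounded index (e.g., a sequence with $\rho_{1j}\to 0$) may have correlations below $\delta$, indeed below any fixed $\delta'>0$, and then no equicorrelated matrix with positive off-diagonal entry lies entrywise below $\mathbf{\Sigma}_n$, so the paper's single comparison is not justified. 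Your block decomposition repairs exactly this: the fixed block $\{1,\dots,N(\epsilon)\}$ is disposed of by the union bound $N\alpha/n\to 0$, and Slepian is applied only to the tail block, where the $\liminf$ genuinely guarantees $\rho_{ij}\geq\delta-\epsilon$; the bookkeeping of exponent $n-N$ versus $n$ at the common cutoff $c_{\alpha,n}$ is handled correctly via $\Phi^{n-N}\geq\Phi^{n}$ and the limit $H_n(\delta-\epsilon)\to 1$. So what the paper's route buys is brevity (and it is valid under the stronger reading $\inf\rho_{ij}=\delta$, or if every $\rho_{ij}\geq\delta$), while your route buys a proof of the theorem under the stated $\liminf$ hypothesis --- in effect you have filled a small but real gap in the paper's own argument.
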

We establish this using the famous inequality due to Slepian~\cite{r10}:
\begin{theorem}[Slepian]\label{thm3.4}
Let $\mathbf{X}$ be distributed according to $N(\mathbf{0}, \mathbf{\Sigma})$, where $\mathbf{\Sigma}$ is a correlation matrix. For an arbitrary but fixed real 
vector $\mathbf{a}=\left(a_{1}, \ldots, a_{k}\right)^{\prime}$, consider the quadrant probability
$$g(k, \mathbf{a}, \mathbf{\Sigma})=\mathbb{P}_{\mathbf{\Sigma}}\left[\bigcap_{i=1}^{k}\left\{X_{i} \leqslant a_{i}\right\}\right].$$ Let $\mathbf{R}=\left(\rho_{i j}\right)$ and $\mathbf{T}=\left(\tau_{i j}\right)$ be two positive semidefinite correlation matrices. If $\rho_{i j} \geqslant \tau_{i j}$ holds for all $i, j$, then $g(k, \mathbf{a}, \mathbf{R}) \geq g(k, \mathbf{a}, \mathbf{T})$, i.e
$$
\mathbb{P}_{\mathbf{\Sigma}=\mathbf{R}}\left[\bigcap_{i=1}^{k}\left\{X_{i} \leqslant a_{i}\right\}\right] \geqslant \mathbb{P}_{\mathbf{\Sigma}=\mathbf{T}}\left[\bigcap_{i=1}^{k}\left\{X_{i} \leqslant a_{i}\right\}\right]
$$
holds for all $\mathbf{a}=\left(a_{1}, \ldots, a_{k}\right)^{\prime} .$ Furthermore, the inequality is strict if $\mathbf{R}, \mathbf{T}$ are positive definite and if the strict inequality $\rho_{i j}>\tau_{i j}$ holds for some $i, j$.
\end{theorem}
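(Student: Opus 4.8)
The plan is to prove Slepian's inequality by a Gaussian interpolation argument that reduces the comparison of the two quadrant probabilities to a sign computation for a single family of partial derivatives. Since a convex combination of positive semidefinite correlation matrices is again a positive semidefinite correlation matrix, I would set $\mathbf{\Sigma}(s) = s\mathbf{R} + (1-s)\mathbf{T}$ for $s \in [0,1]$ and define $F(s) = g(k, \mathbf{a}, \mathbf{\Sigma}(s))$. The endpoints are $F(0) = g(k,\mathbf{a},\mathbf{T})$ and $F(1) = g(k,\mathbf{a},\mathbf{R})$, so it suffices to show that $F$ is nondecreasing, for which it is enough to prove $F'(s) \geq 0$. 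I would first treat the case in which $\mathbf{R}$ and $\mathbf{T}$ are positive definite, so that $\mathbf{\Sigma}(s)$ is positive definite for every $s \in [0,1]$ and the Gaussian density $\varphi_{\mathbf{\Sigma}(s)}$ exists, and then recover the general positive semidefinite case at the very end by approximating $\mathbf{R}, \mathbf{T}$ with positive definite correlation matrices and invoking continuity of $g$ in $\mathbf{\Sigma}$.

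The identity driving the whole computation is Plackett's formula: for the density $\varphi_{\mathbf{\Sigma}}$ of $N(\mathbf{0},\mathbf{\Sigma})$ and any pair $i \neq j$,
$$\frac{\partial \varphi_{\mathbf{\Sigma}}}{\partial \sigma_{ij}} = \frac{\partial^{2} \varphi_{\mathbf{\Sigma}}}{\partial x_i \, \partial x_j}.$$
I would establish this by writing $\varphi_{\mathbf{\Sigma}}$ as the inverse Fourier transform of the characteristic function $\exp(-\tfrac{1}{2}\, \xi^{\prime}\mathbf{\Sigma}\,\xi)$: differentiating the exponent in the (symmetric) parameter $\sigma_{ij}$ brings down a factor $-\xi_i \xi_j$, which is precisely the factor produced by letting $\partial^{2}/\partial x_i \partial x_j$ act on $e^{-i \xi^{\prime} x}$. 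Matching the two integral representations yields the identity.

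With Plackett's identity in hand, the chain rule gives
$$F'(s) = \sum_{i<j} (\rho_{ij} - \tau_{ij}) \, \frac{\partial g}{\partial \sigma_{ij}}\bigg|_{\mathbf{\Sigma} = \mathbf{\Sigma}(s)},$$
and since $\rho_{ij} - \tau_{ij} \geq 0$ by hypothesis it suffices to show each $\partial g / \partial \sigma_{ij} \geq 0$. Writing $g$ as the iterated integral of $\varphi_{\mathbf{\Sigma}}$ over $\prod_{\ell} (-\infty, a_\ell]$, differentiating under the integral sign, and substituting Plackett's identity, the $x_i$ and $x_j$ integrations collapse by the fundamental theorem of calculus (the boundary contributions at $-\infty$ vanish), leaving
$$\frac{\partial g}{\partial \sigma_{ij}} = \int \cdots \int \varphi_{\mathbf{\Sigma}}(x)\big|_{x_i = a_i,\, x_j = a_j} \prod_{\ell \neq i,j} dx_\ell \geq 0,$$
where the remaining integral is over $\{x_\ell \leq a_\ell : \ell \neq i,j\}$ and the integrand is a nonnegative Gaussian density. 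This gives $F'(s) \geq 0$ and hence $g(k,\mathbf{a},\mathbf{R}) \geq g(k,\mathbf{a},\mathbf{T})$. For strictness, when $\mathbf{R},\mathbf{T}$ are positive definite the density is strictly positive everywhere, so any pair with $\rho_{ij} > \tau_{ij}$ contributes a strictly positive term to $F'(s)$ throughout $[0,1]$, forcing $F(1) > F(0)$.

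I expect the main obstacle to be the rigorous justification of the termwise differentiation together with Plackett's identity: interchanging $\partial_{\sigma_{ij}}$ with the multiple integral, and validating the integration-by-parts collapse of the $x_i, x_j$ integrals, both require domination bounds that hold only because $\mathbf{\Sigma}(s)$ stays uniformly positive definite (its smallest eigenvalue is bounded away from zero on the compact interval $[0,1]$). This is exactly why I would isolate the positive definite case before passing to the positive semidefinite limit by continuity.
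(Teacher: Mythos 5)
The paper does not prove this statement at all: Theorem \ref{thm3.4} is quoted as a classical result and attributed to Slepian~\cite{r10}, so there is no in-paper proof to compare yours against. Judged on its own merits, your proposal is the standard and correct interpolation proof (essentially Plackett's argument, which is also how the result is proved in the literature the paper cites): the convex path $\mathbf{\Sigma}(s)=s\mathbf{R}+(1-s)\mathbf{T}$ stays within the class of correlation matrices, Plackett's identity $\partial\varphi_{\mathbf{\Sigma}}/\partial\sigma_{ij}=\partial^{2}\varphi_{\mathbf{\Sigma}}/\partial x_i\,\partial x_j$ is correctly derived from the Fourier representation (with the symmetric pair $\sigma_{ij}=\sigma_{ji}$ treated as one parameter, which matches your sum over $i<j$), and the collapse of the $x_i,x_j$ integrations leaves a nonnegative integrand, giving $F'(s)\geq 0$. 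Your strictness argument is also sound: for positive definite endpoints the density is strictly positive, so a pair with $\rho_{ij}>\tau_{ij}$ makes $F'(s)>0$ uniformly on $[0,1]$.

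Two small points deserve explicit care if you write this up fully. First, in the passage from positive definite to merely positive semidefinite matrices, continuity of $g$ in $\mathbf{\Sigma}$ is a weak-convergence statement, and the portmanteau theorem requires the quadrant $\prod_{\ell}(-\infty,a_\ell]$ to be a continuity set of the limit law; this holds here because the diagonal of a correlation matrix is $1$, so every marginal is $N(0,1)$ and each boundary hyperplane $\{x_i=a_i\}$ is a null set even for degenerate $\mathbf{\Sigma}$. A convenient explicit approximation is $(1-\epsilon)\mathbf{R}+\epsilon I_k$ and $(1-\epsilon)\mathbf{T}+\epsilon I_k$, which preserves the entrywise ordering. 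Second, your stated reason for domination --- the smallest eigenvalue of $\mathbf{\Sigma}(s)$ being bounded away from zero on $[0,1]$ --- is exactly right and is indeed where the positive definite reduction earns its keep; without it, both the differentiation under the integral sign and the vanishing of the boundary terms at $-\infty$ would be problematic. Neither point is a gap in the ideas, only in the level of detail.
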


\begin{proof}[Proof of Theorem \ref{thm3.3}]
Note that, for any $n \times n$ correlation matrix $\mathbf{\Sigma}_{n}$, $$FWER(n,\alpha,\mathbf{\Sigma}_{n})=1-g(n, \mathbf{a}, \mathbf{\Sigma}_{n})$$
where $g$ is as defined in Theorem \ref{thm3.4} and $a_{i}=\Phi ^{-1}(1-\alpha/n)$ for $1 \leq i \leq n$. Since $\liminf \rho_{ij}=\delta>0$, we have $g(n, \mathbf{a},\Sigma_{n}) \geq g(n, \mathbf{a}, M_{n}(\delta))$ from Theorem \ref{thm3.4}, where $M_{n}(\delta)$ denotes the $n \times n$ matrix with diagonal entries equal to $1$ and off-diagonal entries equal to $\delta$. Therefore, $FWER(n,\alpha, \mathbf{\Sigma}_{n}) \leq FWER(n,\alpha, \delta)$. The rest follows from Theorem \ref{thm3.1}. 
\end{proof}

\section{Generalized familywise error rates}The increase of computational abilities have allowed for the simultaneous testing of many (thousands or millions) statistical hypothesis tests.
In many areas, e.g microarray data analysis, the number of hypotheses under consideration is quite
large. Control of the FWER in those cases is so stringent that departures from the null hypothesis have little chance of being detected. Consequently, alternative measures of error control have been proposed in the literature.

Lehmann and Romano~\cite{r7} consider the $k$-FWER, the probability of rejecting at least $k$ true null hypotheses in a multiple testing framework.  Such an error rate criterion with $k>1$ is pertinent to settings where one may tolerate one or more false rejections, provided the number of false positives is controlled. 

Consider the arbitrarily correlated setup described in Section 3. Lehmann and Romano~\cite{r7} remark that control of the $k$-FWER allows one to decrease the Bonferroni cutoff $\Phi ^{-1}(1-\alpha/n)$
to $\Phi ^{-1}(1-k\alpha/n)$, and thereby greatly increase the ability to detect false hypotheses. Thus, for their procedure, 
\begin{align*}k\text{-}FWER(n, \alpha, \mathbf{\Sigma}_{n}) &=\mathbb{P}\left(X_{i}>\Phi ^{-1}(1-k\alpha/n) \hspace{2mm} \text{for at least $k$}
\hspace{2mm}\text{$i$'s} \mid H_{0}\right) \end{align*}
where $H_{0}$ is the intersection null hypothesis.

\vspace{2mm}
We now extend Theorem \ref{thm3.3} for generalized familywise error rates.

\begin{corollary}\label{c2}
Suppose $\liminf \rho_{ij}=\delta>0$. Then, for any $\alpha \in (0,1)$ and any positive integer $k$ satisfying $k\alpha <1$,
$$\lim_{n \to \infty}k\text{-}FWER(n,\alpha,\mathbf{\Sigma}_{n}) = 0.$$
\end{corollary}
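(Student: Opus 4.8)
The plan is to reduce the statement to the ordinary $1$-FWER result of Theorem \ref{thm3.3} by a simple containment of events. Write $c = \Phi^{-1}(1-k\alpha/n)$ for the cutoff used by the $k$-FWER procedure, and observe that this is precisely the Bonferroni threshold at target level $k\alpha$. The crucial point is that the event of rejecting at least $k$ true nulls is contained in the event of rejecting at least one true null with the \emph{same} cutoff $c$. Formally,
$$
\left\{X_i > c \text{ for at least } k \text{ } i\text{'s}\right\} \subseteq \left\{X_i > c \text{ for some } i\right\},
$$
so that, under $H_0$,
$$
k\text{-}FWER(n, \alpha, \mathbf{\Sigma}_n) \leq \mathbb{P}_{H_0}\left(\bigcup_{i=1}^n \{X_i > c\}\right) = FWER(n, k\alpha, \mathbf{\Sigma}_n).
$$

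The second step is to invoke Theorem \ref{thm3.3}. The hypothesis $k\alpha < 1$, together with $k \geq 1$ and $\alpha > 0$, guarantees that the effective target level $k\alpha$ lies in $(0,1)$, which is exactly the range permitted there. Since $\liminf \rho_{ij} = \delta > 0$ by assumption, Theorem \ref{thm3.3} applied with $\alpha$ replaced by $k\alpha$ gives $\lim_{n \to \infty} FWER(n, k\alpha, \mathbf{\Sigma}_n) = 0$. Combining this with the displayed inequality and the nonnegativity of the $k$-FWER forces $\lim_{n \to \infty} k\text{-}FWER(n, \alpha, \mathbf{\Sigma}_n) = 0$.

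I do not anticipate a genuine obstacle: the whole content of the corollary is the recognition that lowering the Bonferroni cutoff from level $\alpha$ to level $k\alpha$ still keeps the effective level strictly below $1$ (this is exactly where the hypothesis $k\alpha < 1$ is used), after which the monotone set inclusion hands the conclusion directly to Theorem \ref{thm3.3}. The only point that must be stated with care is that the threshold appearing in the $k$-FWER is the level-$k\alpha$ Bonferroni cutoff, so that the correct dominating quantity is $FWER(n, k\alpha, \mathbf{\Sigma}_n)$ rather than $FWER(n, \alpha, \mathbf{\Sigma}_n)$.
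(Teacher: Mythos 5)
Your proof is correct and follows essentially the same route as the paper: bound the $k$-FWER by $FWER(n, k\alpha, \mathbf{\Sigma}_n)$ (the paper states this inequality as ``evident,'' while you justify it via the event containment, which is exactly the right reasoning since the $k$-FWER cutoff $\Phi^{-1}(1-k\alpha/n)$ is the Bonferroni threshold at level $k\alpha$), then apply Theorem \ref{thm3.3} with target level $k\alpha \in (0,1)$.
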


\begin{proof}[Proof of Corollary \ref{c2}]
Evidently, for any natural number $k$ satisfying $k\alpha <1$,
$$k\text{-}FWER(n, \alpha, \mathbf{\Sigma}_{n}) \leq FWER(n, \alpha^{*}=k \alpha, \mathbf{\Sigma}_{n}).$$
The rest is immediate from Theorem \ref{thm3.3}.
\end{proof}

\newpage
\section{Simulation Study}We have mentioned in Section 2 that under $H_{0}$, $X_{i}=\theta+Z_{i}$
where $\theta \sim N(0,\rho)$, independent of $\left\{Z_{n}\right\}_{n \geq 1}$ and $Z_{i} \overset{iid}{\sim} N(0,1-\rho)$. Now, \eqref{defFWER} gives us
\begin{eqnarray}
    FWER(n, \alpha,\rho) & = & \mathbb{P}_{H_{0}}\bigg(\bigcup_{i=1}^{n}\{Z_{i}+\theta > c_{\alpha,n}\}\bigg) \nonumber \\
    & = & \mathbb{P}_{H_{0}}\bigg(\max_{1 \leq i \leq n} Z_{i}> c_{\alpha,n} - \theta\bigg) \nonumber \\
    & = & \mathbb{P}_{H_{0}}\bigg(\max_{1 \leq i \leq n} W_{i}> \frac{c_{\alpha,n} - \sqrt{\rho}\beta}{\sqrt{1-\rho}}\bigg) \quad \text{(where $W_{i}=Z_{i}/\sqrt{1-\rho}$, $\theta = \sqrt{\rho}\beta$)} \nonumber\\
    & = & \mathbb{E}_{\beta}\Bigg[\mathbb{I}\bigg\{\max_{1 \leq i \leq n} W_{i}> C_{\beta,\rho}\bigg\}\Bigg] \label{simulate}
\end{eqnarray}
where $\mathbb{I}\{A\}$ denotes the indicator variable of event $A$ and $C_{\beta,\rho}$ is the quantity it is replacing. Note that $\beta \sim N(0,1)$ independent of $W_{i}\sim N(0,1)$ under $H_{0}$. 

Equation \eqref{simulate} illustrates an elegant and computationally less expensive simulation scheme of estimating FWER given $(n, \alpha, \rho)$. Firstly, we generate $1$ million independent observations from $N(0,1)$ (these are the $\beta$ variables, serving as repetitions). Given $\rho$, we compute the cutoff $C_{\beta_{i},\rho}$ for each of the simulated $\beta_{i}$’s, $1\leq i \leq 1$ million. Given $n$, we generate $n$ independent observations from $N(0,1)$ (these are the $W_{i}$’s) and compute the maximum of these $n$ observations. We note for how many $i$’s, the maximum exceeds the cutoff $C_{\beta_{i},\rho}$. This number when divided by $1$ million gives us an estimate of FWER$(n,\alpha,\rho)$.

Table~\ref{sphericcase} presents the estimates of FWER for some large values of $n$ and $\alpha=.05$. It also includes the exact values of FWER for $\rho=0$. It is mention-worthy that, though for small values of $\rho$, there is an increase in FWER from $n=100000$ to $n=1$ million, overall for each positive $\rho$, FWER values decrease as $n$ grows indefinitely. 

\section{Concluding Remarks}
It is well-known that Bonferroni’s procedure becomes very conservative for large-scale multiple testing problems under the classical i.i.d framework. However, very little literature can be found which elucidates the magnitude of the conservativeness of Bonferroni’s method in a dependent setup. Our results address this gap in a unified manner. Our methodology, heavily based on elementary real analysis, nicely applies to general dependent structures. Even more importantly, by considering $k$-FWER instead of FWER, we encompass a much broader class of error rates.

\begin{table*}
\caption{Estimates of FWER($n,\alpha=.05,\rho$)}
\label{sphericcase}
\begin{tabular}{@{}lrrrrc@{}}
\hline
Correlation & \multicolumn{5}{c}{Number of hypotheses ($n$)}\\
\cline{2-6}
\hspace{4.7mm} $(\rho)$ & \multicolumn{1}{c}{$10000$}
& \multicolumn{1}{c}{$100000$} & \multicolumn{1}{c}{$1$ Million}
& \multicolumn{1}{c}{$10$ Million} & \multicolumn{1}{c}{$100$ Million} \\
\hline
0    & .04877069 & .04877059  & .04877058  &  .04877058 & .04877058 \\
0.1  & .007621 & .006174  & .026960  & .013166  & .001011 \\
0.2  & .014523 & .010875  & .024589  & .013381  & .002210 \\
0.3  & .014317 & .009928  & .016512  & .009003  & .001898 \\
0.4  & .011448 & .007309  & .009828  & .005118  & .001239 \\
0.5  & .008180 & .004735  & .005239  & .002523  & .000622 \\
0.6  & .005227 & .002701  & .002436  & .001110  & .000257 \\
0.7  & .002909 & .001324  & .000956  & .000382  & .000079 \\
0.8  & .001325 & .000479  & .000264  & .000081  & .000018 \\
0.9  & .000390 & .000098  & .000035  & .000013  & .000004 \\
\hline
\end{tabular}
\end{table*}


\begin{thebibliography}{4}
\bibitem{r1} \textsc{Das, N.} and \textsc{Bhandari, S.K.} (2021). Bound on FWER for correlated normal. \textit{Statist. Probab. Lett.} \textbf{168} 108943. \href{https://mathscinet.ams.org/mathscinet-getitem?mr=4154991}{MR4154991}

\bibitem{r2} \textsc{Dey, M.} (2021). Behaviour of familywise error rate in normal distributions. \textit{arXiv preprint arXiv:2107.00146}. 

\bibitem{r3} \textsc{Efron, B.} (2007). Correlation and large-scale simultaneous significance testing. \textit{J. Amer. Statist. Assoc.} \textbf{102} 93–103. \href{https://mathscinet.ams.org/mathscinet-getitem?mr=2293302}{MR2293302}

\bibitem{r4} \textsc{Efron, B.} (2010). Correlated z-values and the accuracy of large-scale statistical estimates. \textit{J. Amer. Statist. Assoc.} \textbf{105} 1042–1055. \href{http://mathscinet.ams.org/mathscinet-getitem?mr=2752597}{MR2752597}

\bibitem{r5} \textsc{Efron, B.} (2010). \textit{Large-Scale Inference. Institute of Mathematical Statistics (IMS) Monographs} \textbf{1.} Cambridge Univ. Press, Cambridge. \href{http://mathscinet.ams.org/mathscinet-getitem?mr=2724758}{MR2724758}

\bibitem{r6} \textsc{Gordon, R.D.} (1941). Values of Mills’ Ratio of Area to Bounding Ordinate and of the Normal Probability Integral for Large Values of the Argument. \textit{Ann. Math. Statist.} \textbf{12} 364–366. \href{https://mathscinet.ams.org/mathscinet-getitem?mr=5558}{MR0005558}

\bibitem{r7} \textsc{Lehmann, E.L.} and \textsc{Romano, J.P.} (2005). 
 Generalizations of the familywise error rate. \textit{Ann. Statist.} \textbf{33} 1138 – 1154. \href{https://mathscinet.ams.org/mathscinet-getitem?mr=2195631}{MR2195631}

\bibitem{r8} \textsc{Liu, J.}, \textsc{Zhang, C.} and \textsc{Page, D.} (2016). Multiple testing under dependence via graphical models. \textit{Ann. Appl. Stat.} \textbf{10} 1699-1724. \href{http://mathscinet.ams.org/mathscinet-getitem?mr=3553241}{MR3553241}

\bibitem{r9} \textsc{Loperfido, N.}, \textsc{Navarro, J.}, \textsc{Ruiz, J.M.}, and \textsc{Sandoval, C.J.} (2007). Some relationships between skew-normal distributions and order statistics from exchangeable normal random vectors. \textit{Comm. Statist. Theory Methods} \textbf{36} 1719–1733. \href{https://mathscinet.ams.org/mathscinet-getitem?mr=2409807}{MR2409807}

\bibitem{r10} \textsc{Slepian, D.} (1962). The one-sided barrier problem for Gaussian noise. \textit{Bell System Tech. J.} \textbf{41} 463-501. \href{https://mathscinet.ams.org/mathscinet-getitem?mr=133183}{MR133183}

\bibitem{r11} \textsc{Sun, W.} and \textsc{CAI, T. T.} (2009). Large-scale multiple testing under dependence. \textit{J. R. Stat. Soc. Ser. B. Stat. Methodol.} \textbf{71} 393–424. \href{https://mathscinet.ams.org/mathscinet-getitem?mr=2649603}{MR2649603}


\end{thebibliography}
\end{document}